\theoremstyle{plain}
\newtheorem{theorem}{Theorem}
\newtheorem{corollary}{Corollary}
\newtheorem{lemma}{Lemma}
\newtheorem{proposition}{Proposition}
\def\bcdot{\,\boldsymbol\cdot\,}
\def\lra{\longrightarrow}
\def\a{\alpha}
\def\cE{{\mathcal{E}}}
\def\si{\sigma}
\def\ph{\varphi}
\def\La{\Lambda}
\newcommand{\BL}{\biggl}
\newcommand{\BR}{\biggr}
\def\wt{\widetilde}
\def\wh{\widehat}
\let\rom\textup
\begin{document}

\author{M.~G.~Shur}

\title[Exponentials and $R$-recurrent random walks on groups]%
{Exponentials and $R$-recurrent random walks\\ on groups}

\address{Moscow Institute of Electronics and Mathematics,\newline
\indent National Research University Higher School of Economics}

\date{\today}

\keywords{$r$-invariant measure, $R$-recurrent random walk on a
group, Harris random walk, exponential}

\subjclass[2010]{60B15}

\begin{abstract}
On a locally compact group $E$ with countable base, we consider a
random walk $X$ that has a unique (up to a positive factor)
$r$-invariant measure for some $r>0$. Under some weak conditions on
the measure, there is a unique continuous exponential on $E$
naturally associated with $X$. It follows that there exists an
$R$-recurrent random walk in the sense of Tweedie on $E$ if and
only if $E$ is a recurrent group and there exists a Harris random
walk on~$E$.
\end{abstract}

\maketitle

\section{Introduction}\label{s1}

We deal with random walks on locally compact groups with countable
base. All groups considered below are assumed, without exception,
to have these properties, and the group operation is always written
as multiplication. The terminology pertaining to random walks and
irreducible Markov chains is mainly borrowed from the
books~\cite{8} and~\cite{4}, respectively.

Now let us start the exposition. It is well known that substantial
attention has recently been paid to the description of recurrent
groups, that is, groups on which there exists at least one
recurrent random walk. Most progress in this direction has been
made for abelian groups and connected Lie groups (see \cite{10},
\cite[Chap.~3]{8}, and~\cite{2}). In particular, it has turned out
that recurrent groups possess some special properties. (In
particular, they are unimodular~\cite{3}.)

In the present paper, we single out a fairly large family of random
walks that are not necessarily recurrent but can only be realized
on recurrent groups (see Theorem~\ref{th2} below). This family
consists of all possible spread out random walks that are
$R$-recurrent in the sense of Tweedie, or, which is the same, are
$\wt\pi$-irreducible $R$-recurrent Markov chains~\cite{4,5}, where
$\wt\pi$ is the Haar measure on the group and $R\ge1$ is the
convergence parameter of the Markov chain. Needless to say, such a
random walk is $\rho$-recurrent in the sense common in random walk
theory. (See~\cite[Chap~2]{6}; here $\rho$ is the spectral radius
of the random walk.) An arbitrary Bernoulli random walk on the
integer lattice generated by Bernoulli trials with success
probability $\rho\ne0,1$ can serve as an example of an
$R$-recurrent random walk.

The above-mentioned Theorem~\ref{th2} is preceded by
Theorem~\ref{th1} concerning conditions for the existence of a
unique continuous exponential associated with some random walk
(see~\eqref{eq1} below). In this connection, recall that a Borel
function $\ph>0$ defined on a group~$E$ is called an
\textit{exponential} on~$E$ if $\ph(xy)=\ph(x)\ph(y)$ for any
$x,y\in E$; such functions play a noticeable role in random walk
theory~\cite{6}. Theorem~\ref{th2} expresses the simple fact that
if a random walk is spread out and $R$-recurrent in the sense of
Tweedie, then the random walk on the same group corresponding to
the law $R\ph v$, where $v$ is the law of the original random walk,
is a Harris random walk in the standard sense~\cite{8,11}.

Theorem~\ref{th1} and~\ref{th2}, which are proved in Secs.~\ref{s2}
and~\ref{s4}, respectively, are the main results of the present
paper. The other assertions, which are mainly gathered in
Sec.~\ref{s3}, supplement Sec.~\ref{s1} and contain preliminary
material for Sec.~\ref{s4}.

In a subsequent publication, the author intends to use the theory
discussed here to further develop the results in~\cite{9}, in
particular, to obtain new strong ratio limit theorems.

Let us explain the main notation used in the paper. We everywhere
consider a group~$E$ of the type indicated above with the family
$\cE$ of Borel subsets. We fix a right Haar measure $\pi$ on $\cE$
and the corresponding left Haar measure $\pi_1$ such that
$\pi_1(A)=\pi(A^{-1})$ for any $A\in\cE$, where $A^{-1}=\{x\in
E\colon x^{-1}\in A\}$. The abbreviation ``a.e.'' stands for
``$\pi$-almost everywhere'' or ``$\pi$-almost every,'' depending on
the context.

We specify a random walk $X=(X_n;n\ge0)$ on $E$, which will be
subjected to various restrictions where necessary. We assign the
random walk $\wh X$ dual to $X$ to the law $\wh v$, where $\wh
v(A)=v(A^{-1})$, $A\in\cE$, and the transition operators
corresponding to $X$ and $\wh X$ are denoted by $P$ and $\wh P$,
respectively~\cite{8}. Thus,
\begin{equation*}
    Pf(x)=\int f(xy)v(dy),\qquad \wh Pf(x)=\int f(xy)\wh v(dy)
\end{equation*}
for all $x\in E$, where $f$ ranges over the family of all Borel
functions $f\colon E\lra(-\infty,\infty]$ bounded below.

Finally, if $\nu$ is a measure on $\cE$ and $f$ is a nonnegative
Borel function on $E$, then the measures $\nu P$ and $\mu=f\nu$ are
defined in the usual way,
\begin{equation*}
    \nu P(A)=\int p(x,A)\nu(dx),\qquad
    \mu(A)=\int_A f\,d\nu,\quad A\in\cE.
\end{equation*}

\section{Exponentials and random walks}\label{s2}

Let $r\in(0,\infty)$. A Borel function $f\colon E\lra[0,\infty]$ or
a measure $\nu$ defined on $\cE$ is said to be
$r$-\textit{invariant for a random walk}~$X$ if $\int f\,d\pi>0$,
$f\not\equiv\infty$, and $f=rPf$ or if $\nu(E)>0$ and $\nu=r\nu P$,
respectively~\cite{4}. (Many authors give a different
interpretation to similar notions; e.g., cf.~the definition of an
invariant (or, which is the same, harmonic) function
in~\cite{1,4,7}.)

An exponential $\ph$ defined on $E$ is an $r$-invariant function
for $X$ if and only if
\begin{equation}\label{eq1}
    r=\BL[\int\ph\,dv\BR]^{-1},
\end{equation}
because
\begin{equation*}
    P\ph(x)=\int\ph(x)\ph(y)v(dy)=\ph(x)\int\ph\,dv.
\end{equation*}
The same assertion holds for the case of~$\wh X$ except that
Eq.~\eqref{eq1} should be replaced with its counterpart that
contains the measure $\wh v$ instead of $v$ and is naturally called
the dual version of~\eqref{eq1}.

\begin{theorem}\label{th1}
Assume that, for some $r>0$, a random walk $X$ has a unique \rom(up
to a positive factor\rom) $r$-invariant measure $\pi_0$ that is
continuous with respect to $\pi$ and takes finite values on compact
subsets of~$E$. Then there exists a unique continuous exponential
$\ph$ on $E$ satisfying condition~\eqref{eq1}, and the function
$\psi=\ph^{-1}$ is the unique continuous exponential on $E$
satisfying the dual version of~\eqref{eq1}. The exponentials $\ph$
and $\psi$ are $r$-invariant for $X$ and $\wh X$, respectively, and
the measure $\pi_0$ coincides with $\psi\pi$ up to a positive
factor.
\end{theorem}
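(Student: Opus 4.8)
The plan is to exploit the \emph{uniqueness} hypothesis together with the fact that the operator $P$ acts by right convolution (the increment $y$ in $Pf(x)=\int f(xy)\,v(dy)$ sits on the right), so that the defining relation $\nu=r\nu P$ is preserved by \emph{left} translations. Concretely, for $t\in E$ let $L_t$ denote left multiplication $x\mapsto tx$ and let $(L_t)_{*}$ be the induced pushforward of measures. A one-line computation using $p(tx,A)=v((tx)^{-1}A)=p(x,t^{-1}A)$ gives $[(L_t)_{*}\pi_0]P=(L_t)_{*}(\pi_0P)$, whence $(L_t)_{*}\pi_0=r\,[(L_t)_{*}\pi_0]P$ is again $r$-invariant for $X$. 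Since left translation multiplies $\pi$ by the modular function $\Delta$, the translate $(L_t)_{*}\pi_0$ is still continuous with respect to $\pi$, finite on compacts, and nonzero; so by the uniqueness assumption $(L_t)_{*}\pi_0=c(t)\pi_0$ for some $c(t)>0$. Functoriality $(L_{st})_{*}=(L_s)_{*}(L_t)_{*}$ forces $c(st)=c(s)c(t)$, i.e.\ $c$ is an exponential, though at this stage only a Borel one.

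Next I would pass to densities. Writing $\pi_0=g\pi$ by Radon--Nikodym (with $g$ locally integrable, since $\pi_0$ is finite on compacts), and computing the density of $(L_t)_{*}\pi_0$ with the aid of $\Delta$, the identity $(L_t)_{*}\pi_0=c(t)\pi_0$ turns into an a.e.\ functional equation of the form $g(sy)=h(s)\,g(y)$ for each fixed $s$, where $h$ is again multiplicative. This reduces the whole theorem to showing that this measurable multiplicative relation actually comes from a \emph{continuous} exponential. I expect this regularity step to be the main obstacle.

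To overcome it, I would integrate $g(sy)=h(s)g(y)$ over a compact set $K$ with $0<\int_K g\,d\pi<\infty$, expressing $h(s)$ as a convolution-type integral of the locally integrable $g$ against the compactly supported indicator $\mathbf 1_K$. Continuity of translation in $L^1_{\mathrm{loc}}$ then shows $h$ is continuous, hence a continuous exponential; set $\psi=h$. The quotient $q=g/\psi$ satisfies $q(sy)=q(y)$ a.e.\ for every $s$, so by transitivity of left translation (the only a.e.\ left-invariant functions are constants) $q$ is a.e.\ constant. This yields $\pi_0=\mathrm{const}\cdot\psi\pi$, the last assertion of the theorem.

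Finally I would identify the exponentials and prove uniqueness. Put $\ph=\psi^{-1}$. Using the exponential identity for $\psi$ and right-invariance of $\pi$, a direct substitution gives $(\psi\pi)P(A)=\bigl(\int\ph\,dv\bigr)\int_A\psi\,d\pi$; comparing with $\psi\pi=r(\psi\pi)P$ yields $\int\ph\,dv=r^{-1}$, which is \eqref{eq1}, and then the display $P\ph(x)=\ph(x)\int\ph\,dv$ shows $\ph$ is $r$-invariant for $X$. The substitution $\int\psi\,d\wh v=\int\psi(y^{-1})\,v(dy)=\int\ph\,dv=r^{-1}$ gives the dual version of \eqref{eq1} for $\psi$ and, by the analogous computation with $\wh P$, its $r$-invariance for $\wh X$. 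For uniqueness, any continuous exponential $\ph'$ satisfying \eqref{eq1} produces through $(\ph')^{-1}\pi$ a further continuous, compact-finite, $r$-invariant measure; the uniqueness of $\pi_0$ forces $(\ph')^{-1}\propto\psi$, and since every exponential equals $1$ at the identity the proportionality constant is $1$, giving $\ph'=\ph$. The dual uniqueness is identical.
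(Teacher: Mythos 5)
Your argument is correct and turns on the same key observation as the paper's proof: the relation $\nu=r\nu P$ is preserved by left translation, so uniqueness of $\pi_0$ forces a multiplicative cocycle, i.e.\ an exponential. The difference is only in where the regularity is manufactured. The paper first mollifies the density $h_1$ of $\pi_0$ against a continuous compactly supported function, checks by the computation in \eqref{eq3} that the mollified measure $h\pi$ is still $r$-invariant, and then runs the translation/uniqueness argument on the already continuous $h$, so that $h(yx)=a(y)h(x)$ upgrades from a.e.\ to everywhere for free. You instead apply uniqueness directly to $(L_t)_{*}\pi_0$, obtaining the a.e.\ identity $g(sy)=h(s)g(y)$ for the raw density, and then recover continuity of the multiplier by integrating over a compact set; this costs you the extra step that the a.e.\ left-invariant quotient $q=g/\psi$ is a.e.\ constant (Fubini plus the standard fact that a.e.\ left-invariant locally integrable functions are a.e.\ constant), a step the paper gets as a byproduct of uniqueness applied to $\mu=h\pi$. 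Your derivation of \eqref{eq1} from $(\psi\pi)P=\bigl(\int\ph\,dv\bigr)\psi\pi$, the passage to the dual version, and the two uniqueness arguments coincide with the paper's (cf.\ \eqref{eq5} and \eqref{eq6}). One small repair I would make: in the regularity step, replace the indicator $1_K$ by a continuous compactly supported bump $u\ge0$ with $0<\int ug\,d\pi<\infty$; then the continuity of $s\mapsto\int u(y)g(sy)\,\pi(dy)$ follows at once from dominated convergence, whereas translating an indicator against the merely locally integrable (possibly unbounded) $g$ requires a slightly more careful approximation argument than a bare appeal to ``continuity of translation in $L^1_{\mathrm{loc}}$.''
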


\begin{proof}
By the assumption of the theorem, $\pi_0=h_1\pi$ for some Borel
function locally $\pi$-integrable (i.e., $\pi$-integrable on any
compact subset of $E$). Let $v\ge0$ ($v\not\equiv0$) be a
continuous compactly supported function on~$E$. Then the function
\begin{equation*}
    h(x)=\int v(y)h_1(yx)\pi(dy)=\int v(yx^{-1})h_1(y)\pi(dy),
    \qquad x\in E,
\end{equation*}
takes finite values and cannot be zero on the entire $E$. Moreover,
it is continuous on $E$ by virtue of the last relation.

Let us verify that
\begin{equation}\label{eq2}
    \int f\,d\mu=r\int Pf\,d\mu
\end{equation}
for the measure $\mu=h\pi$ and for every Borel function $f\ge0$.
Indeed, if $\Delta$ is the modular function of $E$, then
\begin{equation*}
\begin{split}
    \int f(x)
      &=\Delta(y)\int f(y^{-1}x)h_1(x)\pi(dx)
      =\Delta(y)\int f(y^{-1}x)\pi_0(dx)
\\
      &=r\Delta(y)\int f(y^{-1}x)\pi_0P(dx)
      =r\Delta(y)\int Pf(y^{-1}x)\pi_0(dx)
\\
      &=r\int Pf(x)h_1(yx)\pi(dx)
\end{split}
\end{equation*}
for every $y\in E$ by \cite[Theorem~15.15]{7}, where the third
equality takes into account the $r$-invariance of $\pi_0$ and the
last equality uses the first three with $f$ replaced by~$Pf$.
Moreover,
\begin{equation}\label{eq3}
\begin{split}
    \int f\,d\mu
      &=\int f(x)\BL[\int v(y)h_1(yx)\pi(dy)\BR]\pi(dx)
      \\
      &=\int v(y)\BL[\int f(x)h_1(yx)\pi(dx)\BR]\pi(dy)
      \\
      &=r\int v(y)\BL[\int Pf(x)h_1(yx)\pi(dx)\BR]\pi(dy)
\end{split}
\end{equation}
according to the preceding computation. The first two equalities
in~\eqref{eq3} with $Pf$ substituted for $f$ show that the
right-hand side of~\eqref{eq2} is equal to the last expression
in~\eqref{eq3}. Hence \eqref{eq3} implies relation~\eqref{eq2},
which shows that the measure~$\mu$ is $R$-invariant.

Let us verify that
\begin{equation}\label{eq4}
    h(yx)=a(x)h(y)
\end{equation}
for any $x,y\in E$ but for now postpone the determination of
$a(y)>0$. To this end, we take a point $y\in E$ and a function $f$
of the same type as above and write out~\eqref{eq2} with $f(x)$
replaced by $f(y^{-1}x)$. As a result, we readily find that
\begin{equation*}
    \int f(x)h(yx)\pi(dx)=r\int Pf(x)h(yx)\pi(dx)
\end{equation*}
for a broad class of functions $f$, and hence the measure $\mu_y$,
where $\mu_y(dx)=h(yx)\pi(dx)$, is $r$-invariant for~$X$.
Consequently, $\mu_y$ coincides with $\pi_0$ and $\mu$ up to some
factors, so that $\mu_y=a(y)\mu$ for some $a(y)>0$. In other words,
for each $y\in E$, Eq.~\eqref{eq4} holds for a.e.~$X\in E$, and
since $h$ is continuous, it follows that Eq.~\eqref{eq4} holds for
any $x,y\in E$.

For $x=e$, where $e$ is the identity element of $E$, it follows
from~\eqref{eq4} that $h(y)=a(y)h(e)$, $y\in E$, and so $h(e)\ne0$
and $a(y)=h(y)/h(e)$. Accordingly, replacing the function $v$ used
in the definition of $h$ by the function $\a v$ with some $\a>0$ if
necessary, we can assume in what follows that $h(e)=1$ and
$a(y)=h(y)$ on $E$, and Eq.~\eqref{eq4} means in this case that $h$
is a continuous exponential on~$E$.

The function $\ph=h^{-1}$ is a continuous exponential as well, and
moreover, it satisfies condition~\eqref{eq1}. Indeed, by using the
random walk $\wh X$ dual to $X$, we can rewrite~\eqref{eq2} in the
form
\begin{equation}\label{eq5}
    \int fh\,d\pi=r\int f\wh Ph\,d\pi,
\end{equation}
where $f$ ranges over the same family of functions as above. We see
that $h=r\wh Ph$ and hence
\begin{equation}\label{eq6}
\begin{split}
    h(x)&=r\wh Ph(x)=r\int h(xy)v(dy)
        =rh(x)\int\frac{\wh v(dy)}{h(y^{-1})}
        \\
        &=rh(x)\int\frac{\wh v(dy)}{h(y^{-1})}
        =rh(x)\int\frac{v(dy)}{h(y)}
        =rh(x)\int \ph\,dv
\end{split}
\end{equation}
for a.e.~$x\in E$, because $h$ is an exponential and
$h(y^{-1})=h^{-1}(y)=\ph(y)$. By comparing the left- and rightmost
expressions in~\eqref{eq6}, we arrive at condition~\eqref{eq1} and
hence to the $r$-invariance of $\ph$ for~$X$.

We point out that all but the first equality in~\eqref{eq6} is
a~priori satisfied everywhere in $E$, and hence all terms of these
equalities, together with the last term, are continuous on~$E$.
Since $h$ is continuous, it follows that all equalities
in~\eqref{eq6} hold everywhere in $E$. The first of them
establishes the $r$-invariance of the exponential $h=\ph^{-1}=\psi$
for $\wh X$ and hence the validity of the dual version
of~\eqref{eq1} for~$\psi$.

Next, if some continuous exponential $\psi_1$ satisfies the same
version of~\eqref{eq1}, then $\psi_1=r\wh P\psi_1$, whence one
again obtains~\eqref{eq5} with $\psi_1$ substituted for~$h$. In
other words, we can apply~\eqref{eq2} with $\mu$ replaced by
$\mu_1=\psi_1\pi$, thus establishing the $r$-invariance of $\mu_1$
for $X$ together with the relation $\ph_1=k\psi$ for an appropriate
$k>0$. However, $\psi_1(e)=\psi(e)=1$; i.e., $k=1$ and
$\psi_1=\psi$. Thus, $\psi$ has the uniqueness property claimed in
the theorem.

One can readily justify a similar property of the exponential
$\phi$. Namely, if a continuous exponential $\phi_1$ satisfies
condition~\eqref{eq1}, then the dual version of~\eqref{eq1} applies
to $\psi_2=\ph_1^{-1}$, because
\begin{equation*}
    \int\psi_2\,d\wh
    v=\int\psi_2(x^{-1})v(dx)=\int\psi_1\,dv=r^{-1},
\end{equation*}
and by the preceding we have $\psi_2=\psi$ and
$\ph_1=\psi^{-1}=\ph$. The proof is complete.
\end{proof}

\begin{corollary}\label{cor1}
If, under the assumptions of Theorem~1, some function $g$ is
$r$-invariant for $\wh X$ and locally $\pi$-integrable, then
$g=k\ph$ a.e.\ for some $k>0$. If the assumptions of the theorem
hold for the random walk $\wh X$, then every function $g$ that is
$r$-invariant for $X$ and locally $\pi$-integrable has the form
$g=k_1\ph$ a.e.\ for some $k_1>0$.
\end{corollary}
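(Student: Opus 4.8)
The plan is to reduce both assertions to the uniqueness of the $r$-invariant measure furnished by Theorem~\ref{th1}, using the duality between $P$ and $\wh P$ that is already implicit in the passage from~\eqref{eq2} to~\eqref{eq5}. The crucial tool is the adjoint relation
\begin{equation*}
    \int Pf\cdot g\,d\pi=\int f\cdot\wh Pg\,d\pi,
\end{equation*}
valid for all Borel $f,g\ge0$; it follows at once from the right-invariance of $\pi$ together with the definition $\wh v(A)=v(A^{-1})$, via the substitution $u=xy$ in the inner integral. This identity says precisely that a locally $\pi$-integrable $g\ge0$ is $r$-invariant for $\wh X$ if and only if the measure $g\pi$ is $r$-invariant for $X$, and symmetrically with the roles of $X$ and $\wh X$ interchanged (since the dual of $\wh X$ is $X$, so that $\int\wh Pf\cdot g\,d\pi=\int f\cdot Pg\,d\pi$ as well).

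For the first assertion I would argue as follows. Let $g$ be locally $\pi$-integrable and $r$-invariant for $\wh X$, so that $g=r\wh Pg$. Multiplying by an arbitrary Borel $f\ge0$, integrating against $\pi$, and applying the adjoint relation gives $\int f\,d(g\pi)=r\int Pf\,d(g\pi)$, i.e.\ $g\pi=r(g\pi)P$. Local integrability makes $g\pi$ finite on compacta and continuous with respect to $\pi$, while $g\pi(E)>0$ because $\int g\,d\pi>0$; hence $g\pi$ is an $r$-invariant measure for $X$ of exactly the kind covered by Theorem~\ref{th1}. Its uniqueness, together with the identification $\pi_0=\psi\pi$, forces $g\pi=k\psi\pi$ for some $k>0$, so that $g=k\psi$ a.e.; here $\psi=\ph^{-1}$ is the continuous exponential that is $r$-invariant for $\wh X$.

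For the second assertion I would first apply Theorem~\ref{th1} to $\wh X$ in place of $X$. Since $\psi$ is already the unique continuous exponential $r$-invariant for $\wh X$, the uniqueness clause identifies the exponential the theorem produces with $\psi$, and hence its associated $r$-invariant measure with $\psi^{-1}\pi=\ph\pi$ up to a positive factor. Now if $g$ is locally $\pi$-integrable and $r$-invariant for $X$, i.e.\ $g=rPg$, the dual adjoint relation shows that $g\pi=r(g\pi)\wh P$, so $g\pi$ is $r$-invariant for $\wh X$. Under the added hypothesis that the assumptions of the theorem hold for $\wh X$, uniqueness yields $g\pi=k_1\ph\pi$, whence $g=k_1\ph$ a.e.

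I expect the main obstacle to be bookkeeping rather than conceptual: carefully verifying the adjoint relation through the right-invariance substitution (the computation underlying~\eqref{eq5}), and confirming that the regularity requirements of Theorem~\ref{th1}—finiteness on compacta and strict positivity of the total mass—are genuinely inherited by $g\pi$ from the local integrability and $r$-invariance of $g$, so that the uniqueness clause applies. In the second part the one subtle point is that the matching of the exponential produced for $\wh X$ with $\psi$, and hence of its measure with $\ph\pi$, relies on the uniqueness of \emph{continuous exponentials} rather than on uniqueness of measures.
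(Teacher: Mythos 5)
Your proof is correct and follows essentially the same route as the paper's: note that local $\pi$-integrability and the adjoint identity $\int Pf\cdot g\,d\pi=\int f\cdot\wh Pg\,d\pi$ make $g\pi$ an $r$-invariant measure for $X$ (resp.\ for $\wh X$) of exactly the type covered by the uniqueness hypothesis of Theorem~\ref{th1}, and then identify it up to a factor with $\pi_0$ and $\mu=\ph^{-1}\pi$. One remark: your conclusion in the first part, $g=k\psi=k\ph^{-1}$ a.e., differs from the corollary as printed ($g=k\ph$), but it agrees with what the paper's own one-line proof actually establishes (it identifies $g\pi$ with $\mu=\ph^{-1}\pi$ up to a factor), so the discrepancy is a typo in the statement rather than a gap in your argument.
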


To establish the first part of the corollary, it suffices to notice
that the measure $g\pi$ is in this case $r$-invariant for $X$ and
takes finite values on compact sets; hence it coincides, up to some
factors, with the measures $\pi_0$ and $\mu=\ph^{-1}\pi$ in the
proof of the theorem. The second part of the corollary follows from
the first.

There is a more convenient version of this corollary for spread out
$R$-recurrent random walks (see Proposition~\ref{p5}). As to the
condition that $r$-invariant functions be locally integrable, it is
often satisfied automatically (see Proposition~\ref{p2}).

\begin{corollary}\label{cor2}
Let the random walk $X$ be symmetric \rom(i.e., $v=\wh v$\rom), and
assume that, for some $r>0$, any of its $r$-invariant measures can
differ only by appropriate factors. Then $r=1$, and every invariant
\rom(i.e., $r$-invariant\rom) function for $X$ is a.e.\ equal to
some constant.
\end{corollary}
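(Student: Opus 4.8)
The plan is to read off both conclusions from Theorem~\ref{th1} once the symmetry $v=\wh v$ is exploited. Symmetry gives $\wh X=X$ and $\wh P=P$, so the dual version of~\eqref{eq1} is \emph{identical} to~\eqref{eq1}. Applying Theorem~\ref{th1} produces the continuous exponential $\ph$ solving~\eqref{eq1} and the continuous exponential $\psi=\ph^{-1}$ solving the dual version of~\eqref{eq1}; since these two conditions now coincide, $\ph$ and $\psi$ are both continuous exponentials satisfying~\eqref{eq1}, and the uniqueness asserted in Theorem~\ref{th1} forces $\ph=\psi=\ph^{-1}$. Thus $\ph^2\equiv1$, and as $\ph>0$ we get $\ph\equiv1$. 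Feeding this back into~\eqref{eq1} yields $r=\bigl[\int 1\,dv\bigr]^{-1}=1$, because $v$ is a probability measure.

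For the assertion on invariant functions I would first record the adjoint identity
\[
    \int (Pf)\,g\,d\pi=\int f\,(\wh Pg)\,d\pi,\qquad f,g\ge0,
\]
obtained from the right-invariance of $\pi$ by the substitution $u=xy$ and Tonelli's theorem; the same computation gives $\pi P=\pi$, so $\pi$ is a $1$-invariant measure for $X$. Under symmetry $\wh P=P$, so the identity says $P$ is self-adjoint on $L^2(\pi)$. Now take any $r$-invariant function $f\ge0$ for $X$; since $r=1$ we have $f=Pf$, and for every Borel $g\ge0$,
\[
    \int g\,d(f\pi)=\int fg\,d\pi=\int (Pf)\,g\,d\pi=\int (Pg)\,f\,d\pi=\int g\,d\bigl((f\pi)P\bigr),
\]
so $f\pi=(f\pi)P$; that is, $f\pi$ is again a $1$-invariant measure for $X$. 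By the uniqueness hypothesis, now available at the value $r=1$, the $1$-invariant measures $f\pi$ and $\pi$ are proportional, whence $f$ equals a positive constant a.e. Note that this route bypasses Corollary~\ref{cor1} and so needs no local-integrability assumption on $f$: the displayed identity holds for all nonnegative $f$ by Tonelli, and the comparison $f\pi=c\pi$ rules out $f=\infty$ on a set of positive measure a~posteriori.

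The one delicate point is bookkeeping with the hypotheses rather than any hard analysis: uniqueness is postulated only ``for some $r$,'' and the real content of the first assertion is that this $r$ is pinned to $1$. That is exactly what the exponential identity $\ph=\ph^{-1}$ delivers through~\eqref{eq1}. The only thing demanding care is the applicability of Theorem~\ref{th1}---namely that the given $r$-invariant measure be continuous with respect to $\pi$ and finite on compact sets, which is the ambient setting inherited from that theorem.
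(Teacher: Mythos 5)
Your proof is correct. The first half --- using $v=\wh v$ to identify \eqref{eq1} with its dual version, so that the uniqueness in Theorem~\ref{th1} forces $\ph=\psi=\ph^{-1}$, hence $\ph\equiv1$ and $r=1$ --- is exactly the paper's argument. For the second half the paper simply cites Corollary~\ref{cor1}, which under symmetry says that a $1$-invariant, \emph{locally $\pi$-integrable} function $g$ equals $k\ph=k$ a.e.; you instead prove the adjoint identity $\int(Pf)g\,d\pi=\int f(\wh Pg)\,d\pi$ and $\pi P=\pi$ directly from right-invariance and Tonelli, deduce that $f\pi$ is itself a $1$-invariant measure whenever $f=Pf$, and invoke the uniqueness hypothesis on invariant measures to get $f\pi=c\pi$. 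This is a genuinely different route with a real payoff: it covers the corollary as literally stated, with no local-integrability assumption on $f$ (the paper's appeal to Corollary~\ref{cor1} strictly needs that hypothesis, or else an appeal to Proposition~\ref{p2}, which is only available for spread out irreducible walks), and it is essentially a re-run of the measure-side computation already present in the proof of Theorem~\ref{th1}, so nothing new is required. Your closing caveat --- that applying Theorem~\ref{th1} at all presupposes the distinguished $r$-invariant measure is absolutely continuous with respect to $\pi$ and finite on compacts --- is a fair observation about the corollary's statement, and the paper makes the same implicit assumption, so it is not a defect of your argument relative to the source.
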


Indeed, in this case we can assume that $X=\wh X$, and
Theorem~\ref{th1} implies the identity $\ph\equiv\ph^{-1}$, by
which $\ph=1$ and $r=1$ (see~\eqref{eq1}). The remaining part of
Corollary~\ref{cor2} can be derived from Corollary~\ref{cor1}.

\section{Intermediate results}\label{s3}

In this section, on the one hand, we make some preparations for the
proof of Theorem~\ref{th2} in Sec.~\ref{s4}; on the other hand, we
give supplementary material related to Theorem~\ref{th1} and
Corollary~\ref{cor1} (see Proposition~\ref{p2}).

In all subsequent statements except for Proposition~\ref{p4}, $X$
is assumed to be a spread out random walk, that is, a random walk
for which some convolution power $v^n$, $n\ge1$, of the law $v$ is
nonsingular with respect to the Haar measure $\pi$. Moreover, as is
often done (cf.~\cite{8}), we restrict ourselves to adapted random
walks.

Let us start by discussing the irreducibility property, which is
interpreted differently in random walk theory and in the general
Markov chain theory. Namely, a random walk is said to be
\textit{irreducible} \cite{1,8} if the least closed semigroup
$T\subset E$ containing the support of the measure $v$ coincides
with $E$; it is said to be $\pi$-irreducible \cite{4} if
\begin{equation}\label{eq7}
    \sum_{n\ge1} p(n,x,A)>0
\end{equation}
for all $x\in E$ and $A\in\cE_+$, where $p(n,\bcdot,\bcdot)$
are the transition probabilities in $n$ steps corresponding to $X$
and $\cE_+=\{A\in\cE\colon\pi(A)>0\}$.

The proof of Proposition~\ref{p1}, which establishes that these
two definitions are essentially the same in the case of spread out
random walks, and of Propositions~\ref{p2} and~\ref{p3} is
based on Lemma~3.7 in~\cite[Chap.~3]{8}, which establishes the
existence of a compact set $V\subset E$ with nonempty interior, a
positive integer $m$, and positive numbers $a$ and $b$ such that
\begin{equation}\label{eq8}
 p(m,x,A)=v^m(x^{-1}A)\ge a\pi(V\cap x^{-1}A)
                      \ge b\pi_1(V\cap x^{-1}A)
\end{equation}
for any $x\in E$ and $A\in\cE$. (See the end of Sec.~\ref{s1} for
the definition of a left Haar measure.)

\begin{proposition}\label{p1}
A spread out random walk $X$ is $\pi$-irreducible if and only if it
is irreducible.
\end{proposition}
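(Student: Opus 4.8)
The plan is to prove the two implications separately, using the key estimate~\eqref{eq8} as the main tool for the direction that is not essentially formal.

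First I would dispose of the easy implication: $\pi$-irreducibility implies irreducibility. Suppose $X$ is $\pi$-irreducible but not irreducible, so that the least closed semigroup $T$ containing the support of $v$ is a proper subset of $E$. The support of every convolution power $v^n$ lies in $T$, so for any starting point $x$ the measures $p(n,x,\bcdot)$ are concentrated on $xT$. If I can produce a set $A\in\cE_+$ disjoint from $\bigcup_{n\ge1}xT$ for a suitable $x$ (for instance $x=e$, using that $T$ is a proper closed subsemigroup and hence cannot have full Haar measure, together with the adaptedness assumption to control where the walk can go), then~\eqref{eq7} fails for that $x$ and $A$, a contradiction. The point here is purely that irreducibility in the semigroup sense forces the accessible region to be all of $E$, so a support that misses a set of positive Haar measure blocks $\pi$-irreducibility.

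Next I would treat the substantive direction: irreducibility implies $\pi$-irreducibility. Here the plan is to use the spread-out hypothesis through Lemma~3.7 of~\cite[Chap.~3]{8}, which furnishes a compact set $V$ with nonempty interior, an integer $m$, and constants $a,b>0$ satisfying~\eqref{eq8}. Fix $x\in E$ and $A\in\cE_+$. By irreducibility, the semigroup generated by the support of $v$ is all of $E$, so there is some $N$ and a point in the support of $v^N$ that steers $x$ into a neighborhood from which $x^{-1}A$ meets the translated copy of $V$; more precisely, I would choose $k\ge1$ so that $v^k$ gives positive mass to a neighborhood $W$ of a suitable group element $z$ with $z^{-1}(x^{-1}A)\cap V$ of positive $\pi_1$-measure, which is possible because the interior of $V$ is nonempty and $A$ has positive Haar measure. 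Applying~\eqref{eq8} at an appropriate point then gives $p(m,\bcdot,A)>0$ on a set that $v^k$ charges, and by the Chapman--Kolmogorov decomposition
\begin{equation*}
    p(m+k,x,A)=\int p(k,x,dw)\,p(m,w,A)>0,
\end{equation*}
which yields one positive term in the sum~\eqref{eq7}.

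The main obstacle I anticipate is the bookkeeping in this last step: one must show that the reachable set under the spread-out part genuinely intersects the region where the lower bound~\eqref{eq8} is effective, i.e.\ that irreducibility lets one position $x^{-1}A$ against a translate of $V$ of positive left-Haar measure. This requires combining the semigroup-density statement (the closed semigroup generated by $\operatorname{supp}v$ is $E$) with the topological fact that $V$ has nonempty interior, so that translates of the interior form an open cover and some finite product from the support lands $x$ in the right place. Once that geometric intersection is secured, the inequality~\eqref{eq8} and the Chapman--Kolmogorov identity make~\eqref{eq7} immediate. I would therefore spend most of the effort on verifying the reachability/positioning claim and treat the final estimate as routine.
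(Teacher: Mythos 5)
Your plan matches the paper's proof in both directions: the converse is exactly the observation that $E\setminus T$ is open, hence of positive Haar measure, and is never charged by the walk started at $e$; the forward direction is the estimate~\eqref{eq8} combined with the Chapman--Kolmogorov equation and irreducibility. The positioning step you flag as the main obstacle is resolved in the paper by noting that for relatively compact $A\in\cE_+$ the map $x\mapsto\pi_1(V\cap x^{-1}A)$ is continuous and not identically zero (Hewitt--Ross, Corollary~20.17), so $p(m,\bcdot,A)>0$ on a nonempty open set, which irreducibility then makes reachable from every starting point.
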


\begin{proof}
Assume that $X$ is irreducible and use relations~\eqref{eq8} for
$x\in E$, $A\in\cE_+$, and $V$, $m$, etc.\ chosen according to what
was just said. If $A\in\cE_+$ is relatively compact, then the last
expression in~\eqref{eq8} continuously depends on $x\in E$ and does
not vanish identically \cite[Corollary~20.17]{7}, so that
$p(m,x,A)>0$ for all $x$ in a nonempty open set. It follows from
the Chapman--Kolmogorov equation and the irreducibility of $X$ that
for each $x\in E$ one has $p(n,x,A)>0$ for some positive integer
$n=n(x)$, and this conclusion readily extends to arbitrary
$A\in\cE_+$. Thus, we have established the $\pi$-irreducibility
of~$X$.

Conversely, let $X$ be $\pi$-irreducible, and let $T$ be the
above-mentioned semigroup. The set $S=E\setminus T$ is open, and
hence $\pi(S)>0$ provided that $S$ is nonempty. But then
$\sum_{n\ge1} p(n,e,S)>0$ by~\eqref{eq7}, which contradicts the
definition of~$T$. Thus, $S$ is empty; i.e., the random walk $X$ is
irreducible.
\end{proof}

\begin{corollary}\label{cor3}
If a spread out random walk $X$ is irreducible, then each of the
Haar measures $\pi$ and $\pi_1$ is a maximal irreducibility measure
for $X$ in the sense of~\cite{4}.
\end{corollary}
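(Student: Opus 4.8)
The plan is to deduce the result from Proposition~\ref{p1} together with the single observation that the right Haar measure $\pi$ is \emph{invariant} for $X$, i.e.\ $\pi P=\pi$. First I would note that, since $X$ is irreducible, Proposition~\ref{p1} makes it $\pi$-irreducible, so $\pi$ is an irreducibility measure. The crux is the invariance of $\pi$, which I would verify by a direct computation: for $A\in\cE$,
\[
  \pi P(A)=\int v(x^{-1}A)\,\pi(dx)
          =\int\!\!\int \mathbf 1_A(xy)\,v(dy)\,\pi(dx)
          =\int \pi(Ay^{-1})\,v(dy),
\]
using Fubini for the last step. The right invariance of $\pi$ gives $\pi(Ay^{-1})=\pi(A)$, so $\pi P(A)=\pi(A)\,v(E)=\pi(A)$; by induction $\pi P^n=\pi$ for every $n\ge0$.

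The main step is then to show that $\pi$ dominates every irreducibility measure. Suppose $\pi(A)=0$. From $\pi P^n=\pi$ we get $\int p(n,x,A)\,\pi(dx)=\pi P^n(A)=0$ for each $n$, and since the integrand is nonnegative, $p(n,x,A)=0$ for a.e.\ $x$; taking the countable union of the exceptional sets shows that
\[
  \sum_{n\ge1}p(n,x,A)=0\quad\text{for a.e.\ }x\in E .
\]
In particular this sum vanishes for at least one $x$, because $E$ is not $\pi$-null. Now let $\phi$ be any irreducibility measure for $X$. If $\phi(A)>0$ held, then the definition of $\phi$-irreducibility (the analogue of~\eqref{eq7} with $\phi$ in place of $\pi$) would force $\sum_{n\ge1}p(n,x,A)>0$ for \emph{all} $x\in E$, contradicting the previous display. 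Hence $\phi(A)=0$, so $\phi\ll\pi$. Together with the $\pi$-irreducibility of $X$, this is exactly the assertion that $\pi$ is a maximal irreducibility measure in the sense of~\cite{4}.

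Finally I would transfer the conclusion to $\pi_1$. Since $d\pi_1=\Delta^{\pm1}\,d\pi$ with the modular function $\Delta$ continuous and strictly positive, the measures $\pi$ and $\pi_1$ are mutually absolutely continuous and hence share the same null sets. Both $\pi$-irreducibility and the domination property $\phi\ll\pi$ depend only on the null sets of the reference measure, so they pass verbatim to $\pi_1$, which is therefore maximal as well. I expect the only delicate point to be the verification of $\pi P=\pi$: one must use the \emph{right} invariance of $\pi$ in precisely the orientation dictated by the convention $Pf(x)=\int f(xy)\,v(dy)$. It is worth noting that spread-outness plays no role here — only irreducibility and the Haar structure are needed.
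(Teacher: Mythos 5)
Your proof is correct and is essentially the argument the paper has in mind: the paper dismisses this as a ``straightforward consequence of Proposition~\ref{p1},'' and your write-up supplies exactly the missing details, namely that Proposition~\ref{p1} gives $\pi$-irreducibility while the invariance $\pi P=\pi$ of the right Haar measure forces every irreducibility measure to be absolutely continuous with respect to $\pi$, whence maximality; the transfer to $\pi_1$ via equivalence of the two Haar measures is also right. One small caveat: your closing remark that spread-outness ``plays no role'' is true only of the maximality step --- it is indispensable for the appeal to Proposition~\ref{p1}, without which $\pi$ need not be an irreducibility measure at all.
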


This corollary, which is a straightforward consequence of
Proposition~\ref{p1}, permits freely using the theory developed
in~\cite{4} in the subsequent exposition.

Note also that, according to Proposition~\ref{p1}, we can replace
the condition of $\pi$-irreducibility in the definition of
$R$-recurrent random walk (see Sec.~\ref{s1}) by the condition of
irreducibility as long as we restrict ourselves to spread out
random walks.

Recall that the replacement of the equality $f=rPf$ in the
definition of $r$-invariant function (see Sec.~\ref{s1}) by the
inequality $f\ge rPf$ gives the definition of $r$-subinvariant
function~\cite{4}.

\begin{proposition}\label{p2}
If a random walk $X$ is spread out and irreducible, then, for each
$r>0$, every function $r$-subinvariant for $X$ is locally
$\pi$-integrable.
\end{proposition}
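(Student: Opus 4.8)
The plan is to convert the subinvariance inequality into a pointwise lower bound for $f$ involving its integral over left translates of the minorizing set $V$ from~\eqref{eq8}, and then, starting from a single point where $f$ is finite, to reach a neighbourhood of an arbitrary target point with a transition density bounded below, using irreducibility. First I would iterate $f\ge rPf$: since $P$ is order-preserving on nonnegative functions, $f\ge r^nP^nf$ for every $n\ge1$, that is
\[
    f(x)\ge r^n\int f(xy)\,v^n(dy)=r^n\int f(z)\,p(n,x,dz),
\]
with all quantities nonnegative.

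Next I would rewrite \eqref{eq8} in left-Haar form as $p(m,x,\bcdot)\ge b\,\mathbf{1}_{xV}\,\pi_1$, using the left-invariance of $\pi_1$ to pass from $\pi_1(V\cap x^{-1}A)$ to $\pi_1(xV\cap A)$. Combining this with the Chapman--Kolmogorov equation and Tonelli's theorem should give, for every $j\ge0$ and every $x$, the minorization
\[
    p(m+j,x,A)\ge b\int_A p\bigl(j,x,uV^{-1}\bigr)\,\pi_1(du),
\]
so that the $\pi_1$-density of $p(m+j,x,\bcdot)$ at $u$ is at least $b\,p(j,x,uV^{-1})$.

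Then I would fix a point $x_*$ with $f(x_*)<\infty$ (one exists since $f\not\equiv\infty$) and an arbitrary target $u_0\in E$. As $u_0(V^\circ)^{-1}$ is a nonempty open set, hence lies in $\cE_+$, Proposition~\ref{p1} gives $\pi$-irreducibility and thus some $j$ with $p(j,x_*,u_0(V^\circ)^{-1})>0$. The key point is that $u\mapsto p(j,x_*,u(V^\circ)^{-1})$ is lower semicontinuous: this follows from Fatou's lemma applied to $\mathbf{1}_{(V^\circ)^{-1}}(u^{-1}z)$, using continuity of $u\mapsto u^{-1}z$ and lower semicontinuity of the indicator of the open set $(V^\circ)^{-1}$. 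Being positive at $u_0$, it exceeds some $\epsilon>0$ on a neighbourhood $N$ of $u_0$; since $p(j,x_*,uV^{-1})\ge p(j,x_*,u(V^\circ)^{-1})$, the density of $p(m+j,x_*,\bcdot)$ is at least $b\epsilon$ on $N$, and the first step yields
\[
    \infty>f(x_*)\ge r^{m+j}\int f\,dp(m+j,x_*,\bcdot)\ge r^{m+j}b\,\epsilon\int_N f\,d\pi_1 .
\]
Hence $f$ is $\pi_1$-integrable on a neighbourhood of every point, so locally $\pi_1$-integrable; since $\pi$ and $\pi_1$ are mutually absolutely continuous with density (the modular function $\Delta$) bounded above and below on every compact set, $f$ is locally $\pi$-integrable, which is the claim.

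The main obstacle will be the passage from the mere pointwise positivity supplied by irreducibility to a genuinely positive lower bound on a whole neighbourhood of $u_0$. I expect to overcome it precisely through the lower-semicontinuity argument above, which trades a uniform bound over a compact set for a purely local statement near each point; finitely many such neighbourhoods then cover any compact set, giving local integrability.
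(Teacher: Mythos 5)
Your proof is correct, but it runs in the opposite direction from the paper's. The paper argues by contradiction: it locates a point $z$ near which a non-locally-integrable $f$ fails to be $\pi_1$-integrable, translates by $s=yz^{-1}$ so that the bad neighbourhood $s^{-1}W$ sits under the window $W$ of the minorization \eqref{eq8}, concludes $P^mf\equiv\infty$ on the open set $s^{-1}W_0$, hence $f\equiv\infty$ there by subinvariance, and then spreads $f\equiv\infty$ over all of $E$ via the Chapman--Kolmogorov equation and irreducibility, contradicting $f\not\equiv\infty$. You instead propagate finiteness forward: starting from a single point $x_*$ with $f(x_*)<\infty$, you compose $j$ free steps with the minorized $m$-step kernel to get a $\pi_1$-density lower bound $b\,p(j,x_*,uV^{-1})$ near an arbitrary target $u_0$, and the iterated inequality $f\ge r^{m+j}P^{m+j}f$ then bounds $\int_N f\,d\pi_1$. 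The two arguments are essentially contrapositives of one another and both rest entirely on \eqref{eq8}, but the technical work is distributed differently: the paper gets uniformity for free because \eqref{eq8} is uniform over the starting points $x\in W_0$ (it translates the \emph{source}), whereas you must manufacture uniformity near the \emph{target}, which is exactly what your lower-semicontinuity/Fatou step for $u\mapsto p(j,x_*,u(V^\circ)^{-1})$ supplies. Your version is direct and avoids the contradiction, at the cost of that extra semicontinuity lemma; the paper's is shorter because translation invariance does the localization automatically. One cosmetic remark: your appeal to Proposition~\ref{p1} for $\pi$-irreducibility is legitimate (it is proved independently, from the same Lemma giving \eqref{eq8}), and the passage from local $\pi_1$- to local $\pi$-integrability via boundedness of the modular function on compacta is the same equivalence the paper uses tacitly in its opening covering argument.
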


\begin{proof}
Assume the contrary: there exists an $r$-subinvariant function $f$
for $X$ that is not $\pi$-integrable on a nonempty compact set~$F$.
Then $f$ is not $\pi$-integrable and hence not $\pi_1$-integrable
in any neighborhood of some point $z\in F$. (Otherwise, it would be
$\pi$-integrable in some neighborhood $G_x$ of each point $x\in F$,
and finitely many sets of the form $G_x$, $x\in F$, would form an
open cover of the compact set $F$, which is only possible if $f$ is
$\pi$-integrable on $F$.)

Fix a point $z$ with this property and again use
relation~\eqref{eq8} retaining the preceding notation. Since the
group $E$ is regular \cite[Chap.~1]{7}, it follows that there
exists a nonempty open set $W$ whose closure is contained in the
set $V$ indicated in~\eqref{eq8} and a neighborhood $W_0$ of the
identity element $e$ such that $x^{-1}W\subset V$ for all $x\in
W_0$. For any $x\in W_0$ and any Borel set $A\subset W$, the last
expression in~\eqref{eq8} coincides with
\begin{equation}\label{eq9}
    b\pi_1(x^{-1}A)=b\pi_1(A),
\end{equation}
because $\pi_1$ is a left Haar measure.

Now take a $y\in W$, set $s=yz^{-1}$ and $g(x)=f(s^{-1}x)$, $x\in
E$, and note that
\begin{multline*}
    \int_Wg\,d\pi_1=\int 1_W(x)f(s^{-1}x)\pi_1(dx)\\
                   =\int 1_W(sx)f(x)\pi_1(dx)
                   =\int_{s^{-1}W}f\,d\pi_1=\infty
\end{multline*}
(where $1_W$ is the indicator function of the set $W$), because
$s^{-1}W$ is a neighborhood of the point $s^{-1}y=z_0$ and $f$ is
nonintegrable in any neighborhood of that point. Hence we have,
by~\eqref{eq8} and~\eqref{eq9},
\begin{equation*}
    P^mg(x)\ge b\int_Wg\,d\pi_1=\infty,\quad x\in W_0,
\end{equation*}
and consequently, $P^mf(x)=P^mg(sx)=\infty$ if $x$ ranges over the
open set $s^{-1}W_0$. Clearly, $f\equiv\infty$ on the same set by
virtue of the $r$-subinvariance of $f$; in conjunction with the
Chapman--Kolmogorov equation, this implies that $f\equiv\infty$ on
$E$ (cf.~the preceding proof), which is inconsistent with the
definition of $r$-subinvariant function. Thus, our assumption at
the beginning of the proof is wrong, and the proof is complete.
\end{proof}

\begin{proposition}\label{p3}
If a random walk $X$ is spread out and irreducible, then there
exists an open set $U\subset E$ such that the set $sU$ and each of
the measures $1_{sU}\pi_1$ and $1_{sU}\pi$ is small for $X$ in the
sense of~\cite{4}.
\end{proposition}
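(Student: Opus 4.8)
The proof hinges on the key estimate~\eqref{eq8} produced by Lemma~3.7 in~\cite[Chap.~3]{8}, which already supplies a compact set $V$ with nonempty interior, an integer $m$, and a constant $b>0$ with
\begin{equation*}
    p(m,x,A)\ge b\pi_1(V\cap x^{-1}A),\qquad x\in E,\ A\in\cE.
\end{equation*}
Recall that, by the definition in~\cite{4}, a set $C$ is \emph{small} for $X$ if there exist an integer $m_0\ge1$, a constant $c>0$, and a nontrivial measure $\nu$ on $\cE$ such that $p(m_0,x,A)\ge c\,\nu(A)$ for all $x\in C$ and $A\in\cE$. So the plan is to manufacture an open set whose left translate $sU$ sits inside $V$ after multiplication by $x^{-1}$, uniformly over $x\in sU$, thereby turning the right-hand side of~\eqref{eq8} into a fixed minorizing measure on $sU$.

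First I would reuse the geometric construction already carried out in the proof of Proposition~\ref{p2}: by regularity of $E$ (\cite[Chap.~1]{7}) choose a nonempty open set $W$ with $\overline{W}\subset\operatorname{int}V$ and a neighborhood $W_0$ of the identity $e$ with $x^{-1}W\subset V$ for all $x\in W_0$; then for any Borel $A\subset W$ and any $x\in W_0$ the last term of~\eqref{eq8} equals $b\pi_1(x^{-1}A)=b\pi_1(A)$ by left-invariance of $\pi_1$, exactly as in~\eqref{eq9}. Next I would set $U=W_0$ (shrinking it if necessary so that it is relatively compact with $\overline{U}\subset W_0$) and pick the translation element $s$ so that the translated set $sU$ lands inside $W$; concretely, taking a point $y\in W$ and $s=y z^{-1}$ as in the earlier proof places $sU$ in a neighborhood of $y$ contained in $W$. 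With $U$ and $s$ so chosen, for every $x\in sU$ and every $A\in\cE$ one has $x^{-1}A\cap V\supset x^{-1}(A\cap sU)$, and the minorization~\eqref{eq8}--\eqref{eq9} yields
\begin{equation*}
    p(m,x,A)\ge b\,\pi_1(A\cap sU),\qquad x\in sU,\ A\in\cE.
\end{equation*}
This exhibits $\nu=1_{sU}\pi_1$, with minorizing constant $b$ and step $m$, as the required small measure, so $sU$ itself and the measure $1_{sU}\pi_1$ are small for $X$.

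To handle $1_{sU}\pi$ I would invoke the equivalence of the two Haar measures on the relatively compact set $sU$: since $\overline{sU}$ is compact and both $\pi$ and $\pi_1$ are finite and strictly positive on nonempty open relatively compact sets, there is a constant $\kappa>0$ with $\pi_1(A\cap sU)\ge\kappa\,\pi(A\cap sU)$ for all $A\in\cE$ (this follows because on a group $\pi_1=\Delta^{-1}\pi$, and the modular function $\Delta$ is bounded below by a positive constant on the compact set $\overline{sU}$). Substituting into the previous display gives $p(m,x,A)\ge b\kappa\,\pi(A\cap sU)$ for all $x\in sU$, so $1_{sU}\pi$ is small as well, with the same step $m$ and minorizing constant $b\kappa$.

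The only delicate point is the uniformity in $x$ over the translated set: I must be sure that $x^{-1}W\subset V$ holds not just for $x$ near $e$ but for every $x\in sU$, which is why the roles of $U$ (a small neighborhood of $e$, where the inclusion $x^{-1}W\subset V$ is available) and of the fixed translation $s$ (which moves $U$ into $W$) are kept separate. Concretely, the minorization I actually use reads $p(m,x,A)\ge b\pi_1(V\cap x^{-1}A)$ with $x$ allowed to range only over $x\in sU$, and for such $x$ the event $A\cap sU$ contributes to $V\cap x^{-1}A$ precisely because $x^{-1}(sU)\subset U^{-1}U\subset W_0^{-1}$ can be arranged to keep $x^{-1}(A\cap sU)$ inside $V$; verifying this inclusion carefully, by possibly shrinking $U$ once more so that $U^{-1}sU$ lies in the good region, is the main obstacle and the step I would write out in full detail.
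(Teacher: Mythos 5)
Your overall strategy (start from \eqref{eq8}--\eqref{eq9} and translate) is the same as the paper's, but the step you yourself flag as ``the main obstacle'' is a genuine gap, not a routine verification. You want the single set $sU$ to serve simultaneously as the set of starting points $x$ and as the support of the minorizing measure, which forces you to check $x^{-1}(A\cap sU)\subset V$ for all $x\in sU$ and all $A$, i.e.\ $(sU)^{-1}(sU)=U^{-1}U\subset V$. Since $U^{-1}U$ is automatically a neighborhood of the identity whenever $U$ is nonempty, this would require $e\in\operatorname{int}V$, and Lemma~3.7 of \cite[Chap.~3]{8} supplies only a compact $V$ with nonempty interior, not one containing $e$. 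Your closing claim that $x^{-1}(sU)\subset U^{-1}U\subset W_0^{-1}$ ``keeps $x^{-1}(A\cap sU)$ inside $V$'' is a non sequitur: lying in the neighborhood $W_0^{-1}$ of $e$ says nothing about lying in $V$. No amount of further shrinking of $U$ repairs this, because the obstruction $e\in V$ does not depend on how small $U$ is.

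The paper avoids the problem by keeping the two roles separate until the very last line. From \eqref{eq8}--\eqref{eq9} one gets \eqref{eq10}, $p(m,x,A)\ge b\pi_1(A)$, for $x$ in a neighborhood $W_0$ of $e$ and $A\subset W$, where $W$ (not $W_0$) is the set carried into $V$ by $x^{-1}$; this already exhibits $W_0$ as a small set and $1_{W}\pi_1$ as a small measure. Translation invariance of the kernel, $p(m,x,A)=p(m,s^{-1}x,s^{-1}A)$, then gives $p(m,x,A)\ge b\pi_1(A\cap sW)$ for all $x\in sW_0$ and \emph{every} $s\in E$. Only at the end does one take $U=W_0\cap s_0W$ (nonempty for a suitable $s_0$): then $sU\subset sW_0$ is small as a subset of a small set, and $1_{sU}\pi_1\le 1_{ss_0W}\pi_1$ is small as a measure dominated by a small measure. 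Note that this produces one $U$ working for every $s$, which is the intended reading of the free variable $s$ in the statement, whereas your construction ties $s$ to the particular translation that moves $U$ into $W$. Your final passage from $1_{sU}\pi_1$ to $1_{sU}\pi$ via boundedness of the modular function on a relatively compact set is correct and matches the paper.
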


\begin{proof}
Relations~\eqref{eq8} and~\eqref{eq9} imply the inequality
\begin{equation}\label{eq10}
    p(m,x,A)\ge b\pi_1(A),
\end{equation}
where $x$ ranges over some neighborhood $W_0$ of $e$, the set $A$
is an arbitrary Borel subset of a nonempty open set $W\subset E$,
and a positive integer $m$ and a $b>0$ are chosen appropriately.
Furthermore, without loss of generality we can assume that $W_0$ is
relatively compact. According to \cite[Definition~2.3]{4},
inequality~\eqref{eq10} shows that the function $1_{W_0}$ and the
measure $1_{W_0}\pi_1$ are small for $X$. Now take an $s\in E$. If
$x\in sW_0$ and $A\subset sW$ ($A\in\cE$), then, by~\eqref{eq10},
\begin{equation*}
     p(m,x,A)=p(m,s^{-1}x,s^{-1}A)\ge b\pi_1(s^{-1}A)=b\pi_1(A),
\end{equation*}
because $s^{-1}x\in W_0$ and $s^{-1}A\subset W$. Thus, we again
arrive at~\eqref{eq10} but with somewhat different $x$ and $A$, and
this time the set $sW_0$ and the measure $1_{sW}\pi_1$ prove to be
small. In view of the relative compactness of $W$ and the relation
between the right and left Haar measures \cite[Sec.~15]{7}, we see
that the measure $1_{sW}\pi$ is small as well. To complete the
proof, it remains to set $U=W_0\cap s_0W$ for an $s_0\in E$ such
that $U$ is nonempty.
\end{proof}

In the following section, we need an intuitively clear statement
(see Proposition~\ref{p4}) which will help us establish that a
certain random walk is a Harris random walk. The proof of this
statement is based on the following lemma, where the symbol $M_x$,
$x\in E$, stands for the expectation corresponding to the
probability measure $P_x$ (which is defined on the corresponding
$\si$-algebra of events and is assigned to $X$ for the initial
state $x$ \cite{4,8}).

\begin{lemma}\label{l1}
For every bounded Borel function $g$ on $E$, the relation
\begin{equation}\label{eq11}
    M_{yx}[g(X_n)]=M_{x}[g(yX_n)],\qquad n\ge0,
\end{equation}
holds for any $x,y\in E$.
\end{lemma}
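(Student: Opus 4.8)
The statement \eqref{eq11} is a translation-invariance identity for the distribution of the random walk started at a translated point. The natural approach is induction on $n$, exploiting the group structure that makes the transition operator commute with left translations. The base case $n=0$ is immediate, since $X_0=yx$ under $P_{yx}$ while $yX_0=y\cdot x=yx$ under $P_x$, so both sides equal $g(yx)$.

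\medskip
\textbf{The inductive step.}
Assume \eqref{eq11} holds for a fixed $n$ and for all $x,y\in E$ and all bounded Borel $g$. The key observation is that the one-step transition of the random walk is driven by right multiplication by the increment, and this is exactly what permits pulling a left translation through the transition operator. Concretely, I would write, for the step from $n$ to $n+1$, using the Markov property in the form $M_{yx}[g(X_{n+1})]=M_{yx}\bigl[(Pg)(X_n)\bigr]$ where $Pg(z)=\int g(zw)\,v(dw)$. Applying the induction hypothesis to the bounded Borel function $Pg$ gives
\begin{equation*}
    M_{yx}\bigl[(Pg)(X_n)\bigr]=M_{x}\bigl[(Pg)(yX_n)\bigr].
\end{equation*}
The remaining task is to identify $(Pg)(yz)$ with the corresponding quantity for the shifted function: since $(Pg)(yz)=\int g(yzw)\,v(dw)$, if I set $g_y(\,\cdot\,)=g(y\,\cdot\,)$, then $(Pg)(yz)=\int g_y(zw)\,v(dw)=(Pg_y)(z)$. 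Substituting this and applying the Markov property once more, now under $P_x$, yields
\begin{equation*}
    M_{x}\bigl[(Pg_y)(X_n)\bigr]=M_{x}\bigl[g_y(X_{n+1})\bigr]=M_{x}\bigl[g(yX_{n+1})\bigr],
\end{equation*}
which is precisely \eqref{eq11} at level $n+1$. This closes the induction.

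\medskip
\textbf{Where the care is needed.}
The computation is routine once the right framing is chosen; the only genuinely delicate point is the bookkeeping of \emph{which} variable the left translation by $y$ acts on at each stage, and the verification that $Pg_y=(Pg)(y\,\cdot\,)$ as functions on $E$. This identity is the algebraic heart of the matter: it expresses that $P$ commutes with the left-translation operator $L_y\colon g\mapsto g(y\,\cdot\,)$, which is a direct consequence of the random walk being defined via right multiplication by the increment $y\mapsto xy$ in the formula for $P$. Since $g$ (and hence every $g_y$ and $Pg$) is bounded and Borel, all expectations are finite and the Markov property applies without integrability concerns, so no additional technical hypotheses intervene. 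I would organize the proof so that the induction hypothesis is invoked for the function $Pg$ rather than $g$ itself, as this is what makes the two applications of the Markov property line up cleanly with the translation identity.
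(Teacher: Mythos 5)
Your argument is correct, but it follows a genuinely different route from the paper's. The paper proves \eqref{eq11} directly: for $g=1_A$ the two sides are $P_{yx}(X_n\in A)$ and $P_x(X_n\in y^{-1}A)$, which coincide because the $n$-step law of the walk started at $z$ is $v^n(z^{-1}\bcdot)$; the general case then follows by passing to simple functions and uniformly approximating an arbitrary bounded Borel $g$. You instead induct on $n$, invoking the Markov property in the form $M_z[g(X_{n+1})]=M_z[(Pg)(X_n)]$ and the identity $P(g(y\,\bcdot))=(Pg)(y\,\bcdot)$, i.e.\ the commutation of $P$ with left translations. Your inductive step is sound: $Pg$ is again bounded and Borel, so the induction hypothesis legitimately applies to it, and the base case is trivial. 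The trade-off is that the paper's proof isolates the statement at the level of measures (the $n$-step transition kernel is left-covariant) and then needs only routine approximation, whereas yours isolates it at the level of the one-step operator and needs only the semigroup property; your version would transfer verbatim to any Markov chain whose transition operator commutes with a group of translations, while the paper's is slightly shorter because the $n$-step law of a random walk is available in closed form. Either proof is acceptable; there is no gap in yours.
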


\begin{proof}
For $n=0$, this is obvious, so consider the case of $n\ge1$. If
$g=1_A$ and $A\in\cE$, then the left- and right-hand sides
of~\eqref{eq11} are equal to the probabilities $P_{yx}(X_n\in A)$
and $P_{x}(X_n\in y^{-1}A)$, respectively, whence~\eqref{eq11}
follows in our case. Obviously, Eq.~\eqref{eq11} remains valid if
$g$ is a Borel function taking finitely many values. If this
condition is violated, then $g$ can be uniformly approximated by
functions satisfying this condition, and we again arrive
at~\eqref{eq11}.
\end{proof}

Consider the functions $h^B(x)=P_x(\La_B^1)$ and
$H^B(x)=P_x(\La_B)$, where $\La_B^1$ and $\La_B$ are the events
that the trajectory of the random walk $X$ hits a set $B\in\cE$ at
least once or infinitely many times, respectively.

\begin{proposition}\label{p4}
One has
\begin{equation}\label{eq12}
    h^{yB}(yx)=h^b(x),\qquad
    H^{yB}(yx)=H^b(x)
\end{equation}
for $x,y\in E$, where $yB=\{z=yx\colon x\in B\}$.
\end{proposition}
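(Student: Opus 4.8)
The plan is to exploit the fact that, for a right random walk, simultaneous left translation of the whole trajectory by $y$ turns a path started at $x$ into a path started at $yx$: if $(X_n)$ is the trajectory under $P_x$, then $(yX_n)$ has the law of the trajectory under $P_{yx}$. Granting this at the level of the full path law, the two relations in~\eqref{eq12} are immediate, since the event ``the trajectory meets $yB$'' (resp.\ ``meets $yB$ infinitely often'') is carried by the coordinatewise translation $\omega_n\mapsto y\omega_n$ exactly onto the event ``the trajectory meets $B$'' (resp.\ ``meets $B$ infinitely often''). Lemma~\ref{l1} is precisely this invariance for a single time, so the first task is to upgrade it to finite-dimensional distributions.

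The key step is to prove, by induction on $k$, that for every bounded Borel $G\colon E^{k+1}\lra\mathbb{R}$ and all times $0\le n_0<n_1<\dots<n_k$,
\begin{equation*}
    M_{yx}[G(X_{n_0},\dots,X_{n_k})]=M_{x}[G(yX_{n_0},\dots,yX_{n_k})],\qquad x,y\in E.
\end{equation*}
The base case $k=0$ is Lemma~\ref{l1}. For the inductive step I would condition on $X_{n_{k-1}}$ and invoke the Markov property, writing the left-hand side as $M_{yx}[\Phi(X_{n_0},\dots,X_{n_{k-1}})]$ with $\Phi(z_0,\dots,z_{k-1})=M_{z_{k-1}}[G(z_0,\dots,z_{k-1},X_m)]$ and $m=n_k-n_{k-1}$. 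Applying the inductive hypothesis to $\Phi$ and then Lemma~\ref{l1} once more to the innermost coordinate (to the function $w\mapsto G(yz_0,\dots,yz_{k-1},w)$) transforms $\Phi(yz_0,\dots,yz_{k-1})$ into $M_{z_{k-1}}[\wt G(z_0,\dots,z_{k-1},X_m)]$ with $\wt G(\,\cdot\,)=G(y\,\cdot\,)$; reversing the Markov step then yields $M_x[G(yX_{n_0},\dots,yX_{n_k})]$, completing the induction. The underlying reason everything fits is the translation invariance of the kernel, $p(n,yu,yC)=v^n((yu)^{-1}yC)=v^n(u^{-1}C)=p(n,u,C)$.

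With the finite-dimensional identity in hand, specializing $G$ to products of indicators gives $P_{yx}(X_{n_i}\in yA_i,\ 0\le i\le k)=P_x(X_{n_i}\in A_i,\ 0\le i\le k)$, and I would pass to the hitting events by monotone approximation. Setting $h^B_N(x)=P_x(X_n\in B\text{ for some }0\le n\le N)$, the relevant event is a finite union of coordinate events, so $h^{yB}_N(yx)=h^B_N(x)$ for every $N$; letting $N\to\infty$ and using continuity of measure from below gives the first relation in~\eqref{eq12}. For $H^B$ I would write $\La_{yB}=\bigcap_{N\ge1}\bigcup_{n\ge N}\{X_n\in yB\}$, handle each inner union as an increasing limit of finite unions (to which the identity applies), and then intersect over $N$ using continuity from above, obtaining the second relation.

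The only genuine work is the induction establishing the finite-dimensional version of Lemma~\ref{l1}; the main obstacle there is bookkeeping the two separate applications of Lemma~\ref{l1} inside the Markov-property conditioning so that the translation is transferred coordinate by coordinate. Everything after that is a routine limiting argument, and the geometric content reduces entirely to the kernel identity $p(n,yu,yC)=p(n,u,C)$.
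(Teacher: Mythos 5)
Your proof is correct, but it takes a different route from the paper's. You prove the stronger intermediate statement that the whole finite-dimensional law of $(yX_{n_0},\dots,yX_{n_k})$ under $P_x$ agrees with that of $(X_{n_0},\dots,X_{n_k})$ under $P_{yx}$, by induction on the number of time points, and then obtain both relations in~\eqref{eq12} as routine monotone limits of cylinder events (finite unions for $h^B$, the $\bigcap_N\bigcup_{n\ge N}$ decomposition for $H^B$). The paper instead runs a bespoke induction on the first-passage decomposition: it introduces the probabilities $h_n^B(x)$ of \emph{first} hitting $B$ at time $n$, proves $h_n^{yB}(yx)=h_n^B(x)$ by the Markov property plus Lemma~\ref{l1}, sums over $n$ to get the first relation, and then derives the second from the first via the limit $M_x[h^B(X_n)]\to H^B(x)$, avoiding any explicit finite-dimensional machinery. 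Both arguments rest on the same engine (Lemma~\ref{l1} together with the Markov property); yours buys more, since the full finite-dimensional invariance immediately yields the analogous identity for any event in the path $\sigma$-algebra (by a monotone class argument), while the paper's is leaner in that it proves only what is needed and recycles $h^B$ to handle $H^B$. The only points in your argument that deserve explicit care are the boundedness and joint measurability of the conditioning function $\Phi$ in the inductive step, both of which are standard.
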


\begin{proof}
Most of the proof deals with the verification of the first relation
in~\eqref{eq12}. First, let us verify that
\begin{equation}\label{eq13}
    h_n^{yB}(yx)=h_n^B(x),\qquad n\ge0,
\end{equation}
for $x,y\in E$, where $h_0^B(x0)=P_x(X_0\in B)$ and
\begin{equation*}
    h_n^B(x)=P_x(X_0\notin B, X_1\notin B,\dotsc, X_{n-1}\notin B,
    X_n\in B),\qquad n\ge1.
\end{equation*}
Relation~\eqref{eq13} is obvious for $n=0$: both parts are
simultaneously $1$ or $0$ depending on whether $x\in B$ or $x\notin
B$, respectively. The case of $n\ge1$ and $x\in B$ is equally easy,
so we assume from now on that $x\notin B$. Assume that \eqref{eq13}
has been proved for some $n\ge0$. Then, by the Markov property,
\begin{equation}\label{eq14}
    h_{n+1}^{yB}(yx)=M_{yx}[1_\La h_n^{yB}(X_1)]
\end{equation}
with the factor $1_\La$ that is the indicator function of the event
$\La=\{X_0\notin B\}$. Since $x\notin B$, we have
$P_{yx}(\La)=P(yX_0\notin yB)=1$, and hence the right-hand side
of~\eqref{eq14} coincides with
\begin{equation*}
    M_{yx}[h_n^{yB}(X_1)]=M_{x}[h_n^{yB}(yX_1)]
    =M_{x}[h_n^{B}(X_1)]=h_{n+1}^{B}(x).
\end{equation*}
(Here the first equality follows from~\eqref{eq11}; the second,
from~\eqref{eq13} with the current value of $n$; and the third,
from~\eqref{eq14} with $y=e$.) Thus, we have proved~\eqref{eq13}
with $n$ replaced by $n+1$, and so \eqref{eq13} holds for all
$n\ge0$.

Since $h^B(x)=\sum_{n\ge0} h_n^B(x)$, we have simultaneously proved
the first relation in~\eqref{eq12}, which, in conjunction
with~\eqref{eq11}, implies the relations
\begin{equation}\label{eq15}
    M_{yx}[h^{yB}(X_n)]=M_{x}[h^{yB}(yX_n)]=M_{x}[h^{B}(X_n)].
\end{equation}
Thus, the left- and right-hand sides of~\eqref{eq15} coincide, and to
justify the second relation in~\eqref{eq12}, it remains to note
that $M_{x}[h^{B}(X_n)]\to H^B(x)$ as $n\to\infty$ $(x\in E)$.
\end{proof}

\section{Random walks $R$-recurrent in the sense of Tweedie}\label{s4}

The main goal of this section is to prove the second main result of
this paper, Theorem~\ref{th2}. However, first we show how
dramatically Theorem~\ref{th1} and Corollary~\ref{cor1} are
simplified if the random walk is spread out and $R$-recurrent in
the sense of Tweedie, where, as before, $R$ stands for the
convergence parameter of~$X$.

\begin{proposition}\label{p5}
Under the above-mentioned conditions,
\begin{enumerate}
    \item[\rom{(i)}] There exists a unique continuous exponential
    $\ph$ on $E$ satisfying condition~\eqref{eq1} with $r=R$.
    \item[\rom{(ii)}] Every function $g$ that is $R$-invariant for
    $X$ or $\wh X$ has the form indicated in the first or second
    assertion, respectively, of Corollary~\rom{\ref{cor1}}.
    Moreover, functions $R$-subinvariant for $X$ or $\wh X$ have
    the same form.
\end{enumerate}
\end{proposition}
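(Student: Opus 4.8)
The plan is to obtain both parts directly from Theorem~\ref{th1} and Corollary~\ref{cor1}, once their hypotheses have been checked; the only substantial point is the regularity of the $R$-invariant measure. I would begin by transferring the standing assumptions to the dual walk $\wh X$. Since $\wh v^{\,n}(A)=v^n(A^{-1})$ and inversion preserves the common null sets of the equivalent Haar measures $\pi$ and $\pi_1$, the walk $\wh X$ is again spread out; from $\wh p(n,x,A)=v^n(A^{-1}x)$ together with $\pi(A^{-1}x)=\pi_1(A)$ and the $\pi$-irreducibility of $X$ one gets $\sum_{n\ge1}\wh p(n,x,A)>0$ for all $x\in E$ and $A\in\cE_+$, so $\wh X$ is $\pi$-irreducible and hence irreducible by Proposition~\ref{p1}. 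Finally, the duality $\int Pf\cdot g\,d\pi=\int f\cdot\wh Pg\,d\pi$ used in the proof of Theorem~\ref{th1} makes the series $\sum_n r^n\int fP^n g\,d\pi$ and $\sum_n r^n\int\wh P^{\,n}f\cdot g\,d\pi$ agree term by term, so $\wh X$ shares the convergence parameter $R$ and is $R$-recurrent together with $X$. Thus Corollary~\ref{cor3} and Propositions~\ref{p2} and~\ref{p3} are available for both $X$ and $\wh X$.

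Next I would supply the measure required by Theorem~\ref{th1}. By the theory of $R$-recurrent chains in~\cite{4}, $X$ has an $R$-invariant measure $\pi_0$, unique up to a positive factor and equivalent to the maximal irreducibility measure; by Corollary~\ref{cor3} the latter is $\pi$, so $\pi_0\sim\pi$ and in particular $\pi_0$ is continuous with respect to $\pi$. For finiteness on compacta I would use that $\pi_0$ assigns finite mass to small sets~\cite{4}; since each translate $sU$ of the open set $U$ from Proposition~\ref{p3} is small and these translates cover $E$, every compact set lies in finitely many of them and hence has finite $\pi_0$-measure. With this $\pi_0$ the hypotheses of Theorem~\ref{th1} hold for $r=R$, which already gives the unique continuous exponential $\ph$ of part~(i); applying the same reasoning to $\wh X$ shows that the hypotheses of Theorem~\ref{th1} also hold for $\wh X$, which is exactly the extra assumption needed for the second assertion of Corollary~\ref{cor1}.

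For part~(ii) I would first invoke the fact~\cite{4} that, for an $R$-recurrent chain, every $R$-subinvariant function is automatically $R$-invariant; applied to $X$ and to $\wh X$, this reduces the subinvariant case to the invariant one. Any $R$-invariant function is locally $\pi$-integrable by Proposition~\ref{p2} (for $X$ or for $\wh X$), so the integrability hypothesis in Corollary~\ref{cor1} is automatic. The first assertion of that corollary, which concerns functions $R$-invariant for $\wh X$, then yields the asserted form in that case, while its second assertion, concerning $X$ and legitimate because the hypotheses of Theorem~\ref{th1} hold for $\wh X$ by the previous paragraph, yields it for functions $R$-invariant for $X$.

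I expect the main obstacle to be the verification that $\pi_0$ is absolutely continuous with respect to $\pi$ and finite on compact sets: this is the bridge between the abstract $R$-recurrence theory of~\cite{4} and the analytic hypotheses of Theorem~\ref{th1}, and it is where Corollary~\ref{cor3} (identifying $\pi$ as a maximal irreducibility measure) and Proposition~\ref{p3} (supplying the small sets that cover compacta) do the real work. Everything else is bookkeeping of $\pi$-duality and a direct appeal to the two quoted results together with the cited structure theory of $R$-recurrent chains.
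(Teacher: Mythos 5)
Your overall route is the same as the paper's: show that $\wh X$ inherits the spread-out, irreducibility, and $R$-recurrence properties of $X$, then feed the resulting $R$-invariant measure into Theorem~\ref{th1} and Corollary~\ref{cor1}, using Proposition~\ref{p2} for local integrability and the fact that $R$-subinvariance implies $R$-invariance for $R$-recurrent chains. The one place where you under-prove relative to the paper is the transfer of $R$-recurrence to $\wh X$: the term-by-term identity $\int fP^ng\,d\pi=\int g\,\wh P^nf\,d\pi$ does not by itself show that $X$ and $\wh X$ share the convergence parameter, because each chain's convergence parameter is read off from such series only when the test function and test measure are small \emph{for that chain}, and an object small for $X$ need not be small for $\wh X$. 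This is precisely the point on which the paper spends its proof: it takes $g$ small for $\wh X$, uses $G^Rf\equiv\infty$ (valid for every $f\ge0$ with $\int f\,d\pi>0$ by the $R$-recurrence of $X$) together with duality to get $\int f\,\wh G^Rg\,d\pi=\infty$, chooses $f$ via Proposition~\ref{p3} so that $f\pi$ is small for $\wh X$, and then excludes $\wh G^Rg\not\equiv\infty$ through the $R$-subinvariance of $\wh G^Rg$ and Proposition~5.1 of~\cite{4}; interchanging $X$ and $\wh X$ then gives $\wh R=R$. Your sketch can be repaired along these lines (or by arranging test objects small for both chains simultaneously), so this is a gap in rigor rather than a wrong approach. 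Conversely, you are more explicit than the paper about why the $R$-invariant measure is absolutely continuous with respect to $\pi$ and finite on compacta --- the paper delegates all of that to Theorem~5.3 of~\cite{4} --- and that portion of your write-up is a useful expansion rather than a deviation.
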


\begin{proof}
One can readily establish that $\wh X$ is spread out, irreducible,
and hence $\pi$-irreducible. Let us prove that this random walk is
$R$-recurrent in the sense of Tweedie. For a Borel function
$f\colon E\lra[0,\infty)$, set $G^Rf=\sum_{n\ge0}R^nP^nf$. We
define $\wh G^Rf$ with the use of the operator $\wh P$ in a similar
way. Assume that $\int f\,d\pi>0$ and a function $g$ has the same
properties as $f$. Since $G^Rf\equiv\infty$ owing to the
$R$-recurrence of $X$, it follows that
\begin{equation}\label{eq16}
    \int f\wh G^Rg\,d\pi=\int gG^Rf\,d\pi=\infty,
\end{equation}
and by setting $\nu=f\pi$, we obtain $\int \wh G^Rg\,d\nu=0$. By
Proposition~\ref{p3}, we can subject the function $f$ to the
requirement that the measure $\nu$ thus introduced be small for
$\wh X$. Now if $\wh G^Rg\not\equiv\infty$, then the function $\wh
G^Rg$ is $R$-subinvariant for $\wh X$, and the last equality
contradicts Proposition~5.1 in~\cite{4} provided that $g$ is small for
$\wh X$. Consequently, $\wh G^Rg\equiv\infty$ for all $g$ small for
$\wh X$; i.e., $\wh R\le R$, where $\wh R$ is the convergence
parameter of the random walk $\wh X$. By interchanging $X$ and $\wh
X$, we obtain the inequality $R\le\wh R$, and hence $R=\wh R$.

As a result, the random walk $\wh X$ proves to be $R$-recurrent as
well, which implies that Proposition~\ref{p5} holds. (See
\cite[Theorem~5.3]{4} as well as Theorem~\ref{th1} and
Corollary~\ref{cor1} in Sec.~\ref{s2}.)
\end{proof}

Let us proceed to the main goal of this section. Starting from the
law $v$ of a Tweedie $R$-recurrent random walk $X$ and the
exponential $\ph$ mentioned in Proposition~\ref{p5}\,(ii), consider
a random walk $\wt X$ on $E$ with the law $\wt v=R\ph v$ and the
transition operator~$\wh P$. (The measure $\wt v$ is a probability
measure by virtue of the $R$-invariance of $\ph$ for $X$ and
condition~\eqref{eq1} with $r=R$.) The convolution powers of the
new law can readily be expressed via the same powers of~$v$,
\begin{equation}\label{eq17}
    \wt v^n=R^n\ph v^n,\qquad n\ge1.
\end{equation}
For example, for each Borel function $f\colon E\lra[0,\infty)$ we
find that
\begin{equation*}
    \int f\,d(\wt v^2)=R^2\int f(xy)\ph(x)\ph(y)v(dx)v(dy)
                      =R^2\int f\ph\,d(v^2),
\end{equation*}
which gives~\eqref{eq17} for $n=2$. (By induction, one can
prove~\eqref{eq17} with the use of similar computations for all
$n\ge1$.) In turn, \eqref{eq17} readily implies that
\begin{equation*}
    \wt P^nf=R^n\frac1\ph P^n(f\ph),\qquad n\ge1,
\end{equation*}
which means that $\wt P$ can be obtained from $P$ by a so-called
similarity transformation~\cite{4}. (One also says that $\wt X$ is
obtained from $X$ by a passage to a $\ph$-process~\cite{6}.)

\begin{theorem}\label{th2}
The random walk $\wt X$ is a Harris random walk~\rom{\cite{4}}, and
hence the group $E$ is recurrent.
\end{theorem}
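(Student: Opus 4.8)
The plan is to prove the two assertions in turn: first that $\wt X$ is a Harris random walk, and then to read off recurrence of $E$ as an immediate consequence. I would begin by recording the elementary structural properties of $\wt X$. Since $\wt v=R\ph v$ with $\ph>0$ everywhere, the support of $\wt v$ coincides with that of $v$, so $\wt X$ is adapted and irreducible together with $X$; and since $\wt v^n=R^n\ph v^n$ by~\eqref{eq17} with $\ph>0$, nonsingularity of some $v^n$ forces nonsingularity of the corresponding $\wt v^n$, so $\wt X$ is spread out. By Proposition~\ref{p1} the walk $\wt X$ is then $\pi$-irreducible, and by Corollary~\ref{cor3} the Haar measure $\pi$ is a maximal irreducibility measure for it; in particular the $\psi$-null sets of $\wt X$ are precisely the $\pi$-null sets.

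Next I would establish that $\wt X$ is recurrent. Summing the similarity relation $\wt P^nf=R^n\ph^{-1}P^n(f\ph)$ over $n\ge0$ gives, for every Borel $f\ge0$,
\begin{equation*}
    \wt Gf=\sum_{n\ge0}\wt P^nf
          =\frac1\ph\sum_{n\ge0}R^nP^n(f\ph)
          =\frac1\ph\,G^R(f\ph).
\end{equation*}
If $\int f\,d\pi>0$, then $\int f\ph\,d\pi>0$ because $\ph>0$, so the $R$-recurrence of $X$ (used already in the proof of Proposition~\ref{p5}) yields $G^R(f\ph)\equiv\infty$; since $\ph$ is a finite-valued continuous exponential, it follows that $\wt Gf\equiv\infty$. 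Thus the $\pi$-irreducible walk $\wt X$ is recurrent in the sense of~\cite{4}.

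The heart of the argument is to upgrade recurrence to the Harris property, and this is the only step I expect to present real difficulty. By the decomposition theory for recurrent chains in~\cite{4}, there is an absorbing Harris set $H$ with $\pi(E\setminus H)=0$; in particular, for every $B\in\cE_+$ one has $H^B(x)=1$ for all $x\in H$, hence for $\pi$-a.e.\ $x$. I would then eliminate the exceptional null set by means of the homogeneity provided by Proposition~\ref{p4}. Fix an arbitrary $x_0\in E$ and a set $B\in\cE_+$. Since right translation preserves the right Haar measure $\pi$, the set $Hx_0^{-1}=\{y\colon yx_0\in H\}$ is again full; and since left translation multiplies $\pi$ by a positive factor, $yB\in\cE_+$ for every $y$. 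Choosing any $y$ with $yx_0\in H$, we obtain $H^{yB}(yx_0)=1$ because $yB\in\cE_+$, and then the second relation in~\eqref{eq12} gives $H^B(x_0)=H^{yB}(yx_0)=1$. As $x_0\in E$ and $B\in\cE_+$ were arbitrary, $H^B(x)=1$ for all $x\in E$ and all $B\in\cE_+$, which is exactly the defining property of a Harris random walk. Finally, a Harris random walk is in particular recurrent, so its existence shows that $E$ carries a recurrent random walk; that is, $E$ is a recurrent group, and the proof is complete.
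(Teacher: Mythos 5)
Your proof is correct and follows essentially the same route as the paper: establish $\pi$-irreducibility and recurrence of $\wt X$, extract a Harris set via the decomposition theory of~\cite{4}, and then use the translation identity~\eqref{eq12} of Proposition~\ref{p4} to propagate $\wt H^B=1$ from that set to every point of $E$. The only (harmless) difference is that you verify recurrence of $\wt X$ by hand from the similarity relation $\wt P^nf=R^n\ph^{-1}P^n(f\ph)$, whereas the paper simply cites \cite[Proposition~5.3]{4} for the $\pi$-irreducibility and recurrence of the $\ph$-process.
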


\begin{proof}
By \cite[Proposition~5.3]{4}, $\wt X$ is $\pi$-irreducible and
recurrent in the sense that $\wt H^B>0$ everywhere in $E$ and $\wt
H^B=1$ a.e.\ in $E$ whenever $B\in\cE_+$, where $\wt H^B$ is the
counterpart of the function $H^B$ (Proposition~\ref{p5}) for~$\wt
X$. By~\cite[Proposition~3.13]{4}, the recurrence of $\wt X$
implies the existence of a Harris set $E_1\in\cE_+$, i.e., a set
such that the restriction of $\wt X$ to $E_1$ is a Harris recurrent
Markov chain.

Take an $x\in E$ and a $B\in\cE_+$. If $z\in E_1$ and $y=zx^{-1}$,
then, by Proposition~\ref{p5},
\begin{equation*}
    \wt H^B(x)=\wt H^{yB}(yx)=\wt H^{yB}(z),
\end{equation*}
and since $z\in E_1$ and $\pi(yB)>0$, we have $\wt H^{yB}(z)=1$. In
other words, $\wt H^B(x)=1$ for all $x\in E$, and hence the random
walk $\wt X$ is a Harris random walk \cite[Definition~3.5]{4}. The
proof of the theorem is complete.
\end{proof}

\providecommand{\bysame}{\leavevmode\hbox to3em{\hrulefill}\thinspace}

\end{document}